\documentclass[a4paper,oneside,reqno]{amsart}

\usepackage[T1]{fontenc}
\usepackage[utf8]{inputenc}
\usepackage{lmodern}
\usepackage[top=30mm,bottom=30mm,left=30mm,right=30mm]{geometry}
\usepackage{mathtools,amssymb}
\usepackage[foot]{amsaddr}
\usepackage[shortlabels]{enumitem}
\usepackage{microtype}
\usepackage{xcolor}
\usepackage[
 backend=biber,
 style=alphabetic,
 sorting=nyvt
]{biblatex}
\usepackage[
 bookmarksnumbered=true,
 colorlinks=true,
 allcolors=blue,
 pdftitle={Extension of solvable coverings across horizontal divisors over the integers},
 pdfauthor={Naganori Yamaguchi}
]{hyperref}

\title[Extension of solvable coverings]{Extension of solvable coverings across horizontal divisors over the integers}
\date{}
\subjclass[2020]{Primary 14F35; Secondary 14E22, 14G40}
\keywords{\'Etale fundamental group, solvable covering, horizontal divisor, ramification, normal crossings}
\thanks{This work was supported by the Japan Society for the Promotion of Science (JSPS) KAKENHI Grant Number 23KJ0881.}

\author[N.~Yamaguchi]{Naganori~Yamaguchi}
\address{Tokyo Denki University, 5 Senju-Asahi-cho, Adachi-ku, Tokyo 120-8551, Japan}
\email{n.yamaguchi@mail.dendai.ac.jp}

\newcommand{\etale}{\mathrm{\acute{e}t}}
\newcommand{\abelian}{\mathrm{ab}}
\DeclareMathOperator{\Spec}{Spec}
\DeclareMathOperator{\Frac}{Frac}
\DeclareMathOperator{\Supp}{Supp}
\DeclareMathOperator{\GL}{GL}
\newcommand{\AbsGalGrp}[1]{G_{#1}}
\newcommand{\Abelianization}[1]{#1^{\abelian}}
\newcommand{\EtFundGrp}[1]{\pi_{1}^{\etale}(#1)}
\newcommand{\CardinalityOfSet}[1]{\lvert #1\rvert}

\numberwithin{equation}{section}

\theoremstyle{plain}
\newtheorem{theorem}{Theorem}[section]

\newtheorem{Itheorem}{Theorem}

\newtheorem{Icorollary}[Itheorem]{Corollary}

\newtheorem{proposition}[theorem]{Proposition}
\newtheorem{lemma}[theorem]{Lemma}
\newtheorem{corollary}[theorem]{Corollary}

\theoremstyle{definition}
\newtheorem{definition}[theorem]{Definition}
\newtheorem{example}[theorem]{Example}
\newtheorem{remark}[theorem]{Remark}

\begin{document}

\begin{abstract}
	We study the effect of deleting a horizontal divisor from an arithmetic scheme on its \'etale fundamental group.
	Assume that the scheme is regular along the divisor, that each prime component contains a normal crossings point of residue characteristic $2$, and that the generic fibers of the boundary components remain geometrically integral after finite \'etale base change.
	We prove that the kernel of the induced surjection on fundamental groups is topologically perfect.
	In particular, the inclusion induces an isomorphism on maximal prosolvable quotients.
	We apply this result to projective hyperplane complements and to the extension of representations over complete local coefficient rings.
\end{abstract}

\maketitle
\tableofcontents

\section*{Introduction}

We study finite \'etale coverings of complements of horizontal divisors over $\mathbb{Z}$.
A theorem of T.~Saito, recalled by Y.~Ihara in~\cite[Appendix]{MR1305400}, states that the complement of the sections $0,1,\infty$ in $\mathbb{P}^{1}_{\mathbb{Z}}$ has trivial \'etale fundamental group.
Its proof uses the absolute Abhyankar lemma at every prime to show that the inertia groups are trivial.
J.-B.~Bost and F.~Charles give a related result for arithmetic surfaces in~\cite[Proposition~9.3.6]{bost2022quasiprojectiveformalanalyticarithmeticsurfaces}.
The same local argument applies in arbitrary dimension when each boundary component contains a normal crossings point in every residue characteristic; see Proposition~\ref{prop:boundary-invariance}.

For finite solvable coverings, a condition at the prime $2$ is sufficient in some cases.
R.~Shimizu and the author proved in~\cite[Theorem~2.6]{shimizu2026branchedcoveringsprojectiveline} that, for a reduced finite union $D$ of rational sections of $\mathbb{P}^{1}_{\mathbb{Z}}$,
\begin{equation*}
	\EtFundGrp{\mathbb{P}^{1}_{\mathbb{Z}}\setminus D}^{\mathrm{solv}}=1
	\quad\Longleftrightarrow\quad
	D\text{ is normal crossings at the prime }2.
\end{equation*}
Here, for a profinite group $G$ we write $G^{\mathrm{solv}}$ for its maximal prosolvable quotient.
In the present paper, we extend the reverse implication to horizontal divisors in arbitrary dimension.

\begin{Itheorem}[Theorem~\ref{thm:perfect-kernel}]\label{thm:intro-perfect-kernel}
	Let $X$ be a connected locally Noetherian scheme over $\Spec(\mathbb{Z})$, let $D$ be a horizontal effective Cartier divisor on $X$, and set $U=X\setminus\Supp(D)$.
	Assume the following conditions:
	\begin{enumerate}[label=\textup{(\alph*)}]
		\item\label{cond:intro-regular}
		      $X$ is regular along $D$.
		\item\label{cond:intro-nc-two}
		      Every prime component of $D$ contains a point of residue characteristic $2$ at which $D$ has normal crossings.
		\item\label{cond:intro-geometric}
		      For every connected finite \'etale covering $X'\to X$, every prime component of $D\times_X X'$ has geometrically integral generic fiber over $\mathbb{Q}$.
	\end{enumerate}
	Then $U$ is connected, and the inclusion $U\hookrightarrow X$ induces an isomorphism
	\begin{equation*}
		\EtFundGrp{U}^{\mathrm{solv}}
		\xrightarrow{\ \sim\ }\EtFundGrp{X}^{\mathrm{solv}}.
	\end{equation*}
\end{Itheorem}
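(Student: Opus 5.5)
The plan is to identify the kernel $N$ of the surjection $\EtFundGrp{U}\to\EtFundGrp{X}$ and show that it is topologically perfect. A surjection of profinite groups with perfect kernel induces an isomorphism on maximal prosolvable quotients: the image of $N$ in any prosolvable quotient of $\EtFundGrp{U}$ is both prosolvable and perfect, hence trivial.

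\textbf{Connectedness and surjectivity.} Connectedness of $U$ follows because $X$ is connected and, by \ref{cond:intro-regular}, normal along $\Supp(D)$, so a Cartier divisor cannot disconnect it. Surjectivity of $\EtFundGrp{U}\to\EtFundGrp{X}$ is standard, since $U$ is dense in the normal locus near $D$.

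\textbf{Generation of $N$ by inertia.} By Zariski--Nagata purity, using \ref{cond:intro-regular}, $N$ is the closed normal subgroup generated by the inertia groups $I_{\eta}$ at the generic points $\eta$ of the prime components of $D$. Each such $\eta$ has residue characteristic $0$, because $D$ is horizontal. Hence the ramification along $\eta$ is tame, and each $I_\eta$ has procyclic image in every finite quotient.

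\textbf{Reduction to cyclic covers.} Suppose $N$ is not perfect. Then there is an open subgroup $M\subset N$, normal in $\EtFundGrp{U}$, with $N/M\cong\mathbb{Z}/\ell$ for some prime $\ell$. Let $X'\to X$ be the connected finite \'etale covering corresponding to a suitable open subgroup of $\EtFundGrp{X}$ (the image of the normalizer data), and set $U'=U\times_X X'$. We then get a $\mathbb{Z}/\ell$-covering $V\to U'$ that is \'etale over $U'$ and genuinely ramified along some prime component $D'_i$ of $D\times_X X'$. Hypotheses \ref{cond:intro-regular} and \ref{cond:intro-nc-two} are local and pass to $X'$, since $X'\to X$ is \'etale. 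Hypothesis \ref{cond:intro-geometric} is inherited by construction.

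\textbf{Local analysis at the prime $2$.} Take the point $x\in D'_i$ of residue characteristic $2$ at which $D'$ has normal crossings. We split into two cases.
\begin{enumerate}[label=\textup{(\roman*)}]
	\item If $\ell=2$, the ramification index along $D'_i$ equals the residue characteristic at $x$. The absolute Abhyankar lemma (as used in Proposition~\ref{prop:boundary-invariance}) then shows that an extension \'etale over $U'$ near $x$ cannot have tame inertia of order $2$ along $D'_i$. The vertical fiber over $2$ lies in $U'$ near $x$, so wild ramification would force ramification along that fiber, which is a contradiction.
	\item If $\ell$ is odd, we base change along the finite \'etale cover $X'[\mu_\ell]\to X'$, which is \'etale near $x$ since $\ell\neq2$. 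Abhyankar's lemma then says the cover is locally Kummer, obtained by adjoining an $\ell$-th root of $u\cdot f_i^{a}$, with $f_i$ a local equation of $D'_i$ and $u$ a unit. Comparing with the global Kummer description on the generic fiber, the restriction to $D'_i$ (a residue or tame symbol) produces a nontrivial $\mu_\ell$-torsor over the generic point of $D'_i$ that is constant, i.e.\ comes from the base field.
\end{enumerate}
Hypothesis \ref{cond:intro-geometric} is what forbids this: $D'_i$ stays geometrically integral after the \'etale base change, so it cannot split, and the constant-field extension would have to be ramified at $2$ or trivial. Either way this contradicts the choice of $M$.

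\textbf{Main obstacle.} I expect the odd-$\ell$ case to be the hardest step. The difficulty is making precise how the local Kummer description at the single char-$2$ normal crossings point, together with geometric integrality of the generic fiber of $D'_i$, forces the inertia at $\eta_i$ to vanish in every abelian quotient. I would first try to handle it by computing the tame symbol along $D'_i$ and using \ref{cond:intro-geometric} to show that this symbol lands in $H^1$ of a field with no nontrivial unramified-at-$2$ $\mu_\ell$-torsors coming from the cover.
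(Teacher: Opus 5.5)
\textbf{There is a genuine gap in your case (ii), the odd-$\ell$ case.} Your reduction is sound in outline: you pass to a $\mathbb{Z}/\ell$-covering $V\to U'=U\times_X X'$ over a connected finite \'etale $X'\to X$ with nontrivial inertia along a component $D'_i$. As your mention of normalizers suggests, $M$ can only be taken normal in an open subgroup of $\EtFundGrp{U}$ containing $N$, which is harmless. Your case $\ell=2$ is exactly Lemma~\ref{lem:inertia-control}. In case (ii), which you yourself leave open, the mechanism you sketch does not work:
\begin{enumerate}[(1)]
\item Base changing to $X'[\mu_\ell]$ is not a finite \'etale covering of $X'$, since it is ramified over $\ell$. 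More importantly, it adjoins exactly the roots of unity whose absence from $K(D'_i)$ is the obstruction. After this base change, a local Kummer cover $y^\ell=uf_i^{a}$ ramified along $D'_i$ is no contradiction unless you also track the action of $\mathrm{Gal}(\mathbb{Q}(\mu_\ell)/\mathbb{Q})$.
\item The residue (tame symbol) of a $\mathbb{Z}/\ell$-torsor along $D'_i$ is not a $\mu_\ell$-torsor over $K(D'_i)$.
\item The appeal to constant extensions being ``ramified at $2$ or trivial'' has no basis; $\mathbb{Q}(\mu_\ell)$, for instance, is unramified at $2$.
\item The normal crossings point of residue characteristic $2$ carries no information about odd $\ell$ at all.
\end{enumerate}

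\textbf{The missing idea is an argument at the generic point $\xi_i$ of $D'_i$ alone.} Since $D'_i$ is horizontal, inertia along it is tame. For a finite Galois quotient $G$ of $\EtFundGrp{U'}$ one has $I_{D'_i}(G)\simeq\mu_r(k^{\mathrm{sep}})$ with $k=K(D'_i)$. This isomorphism matches conjugation by the decomposition group on the left with the cyclotomic action of $\AbsGalGrp{k}$ on the right. If $G$ is abelian, conjugation is trivial, so $\mu_r\subset k$. Condition~(c) says $\mathbb{Q}$ is algebraically closed in $k$, so the only roots of unity in $k$ are $\pm1$, and hence $r\mid 2$.

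In your own language: the class of $V$ lies in $H^1(\Frac(\mathcal{O}_{X',\xi_i}),\mathbb{Z}/\ell)$. Its residue along $D'_i$ lies in $H^0(k,\mathbb{Z}/\ell(-1))=\operatorname{Hom}_{\AbsGalGrp{k}}(\mu_\ell,\mathbb{Z}/\ell)$. This group vanishes for odd $\ell$ because $\mu_\ell\not\subset k$. For $\ell=2$ it never vanishes, which is precisely why the characteristic-$2$ normal crossings point is needed, and needed only there.

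This is how the paper argues. It proves the abelian statement in Lemma~\ref{lem:abelian-invariance}, applies it to every $X_H$ via Lemma~\ref{lem:finite-etale-stability}, and then deduces that the kernel has perfect image in every finite quotient.
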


Condition~\ref{cond:intro-geometric} is satisfied if every prime component of $D$ is smooth, of finite type, and surjective over $\Spec(\mathbb{Z})$, with geometrically connected generic fiber; see Remark~\ref{rem:smooth-boundary}.

The proof first considers a finite abelian quotient.
At each boundary component, the inertia order is odd by the normal crossings condition~\ref{cond:intro-nc-two} in characteristic $2$, and it divides $2$ by condition~\ref{cond:intro-geometric}.
Hence boundary inertia has trivial image.
Applying this argument after finite \'etale base change shows that the image of the boundary kernel in every finite quotient is perfect.
Hence the kernel is topologically perfect.
Therefore, the asserted morphism is an isomorphism.

For a projective hyperplane arrangement in $\mathbb{P}^n_{\mathbb{Z}}$ with $n\geq2$, if no codimension-two linear subspace of the geometric generic fiber is contained in three hyperplanes, then the \'etale fundamental group of the geometric generic complement is abelian by the theorem of L\^{e}--Saito \cite[Theorem~\textup{(3.1.2)}]{MR735874}.
Hence the main theorem gives the following corollary.

\begin{Icorollary}[{Corollary~\ref{cor:hyperplane-complements}}]
	Let $\mathcal{A}$ be a projective hyperplane arrangement over $\mathbb{Z}$.
	Set
	\begin{equation*}
		D_{\mathcal{A}}=\sum_{H\in\mathcal{A}}H,\qquad
		U_{\mathcal{A}}=\mathbb{P}^n_{\mathbb{Z}}\setminus\Supp(D_{\mathcal{A}}).
	\end{equation*}
	Assume that every member of $\mathcal{A}$ contains a point of residue characteristic $2$ at which $D_{\mathcal{A}}$ has normal crossings.
	Then
	\begin{equation*}
		\EtFundGrp{U_{\mathcal{A}}}^{\mathrm{solv}}=1.
	\end{equation*}
	If, moreover, $n\geq 2$ and the geometric generic arrangement is generic in codimension two, then
	\begin{equation*}
		\EtFundGrp{U_{\mathcal{A}}}=1.
	\end{equation*}
\end{Icorollary}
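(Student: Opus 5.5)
The plan is to deduce the corollary from Theorem~\ref{thm:intro-perfect-kernel} with $X=\mathbb{P}^n_{\mathbb{Z}}$ and $D=D_{\mathcal{A}}$, and then to treat the second assertion with the L\^{e}--Saito theorem. First I check the hypotheses of the theorem.

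\textbf{Condition~\ref{cond:intro-regular}.} This holds because $\mathbb{P}^n_{\mathbb{Z}}$ is regular.

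\textbf{Condition~\ref{cond:intro-nc-two}.} This is exactly the assumption on the members of $\mathcal{A}$. Each member $H$ is a prime component; I read a hyperplane over $\mathbb{Z}$ as the zero locus of a primitive linear form, so $H\cong\mathbb{P}^{n-1}_{\mathbb{Z}}$ is integral, and $D_{\mathcal{A}}$ is a horizontal effective Cartier divisor.

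\textbf{Condition~\ref{cond:intro-geometric}.} Since $\mathbb{P}^n_{\mathbb{Z}}$ is simply connected (by SGA1 for projective space over $\mathbb{Z}$, combined with Minkowski's theorem $\pi_1(\Spec\mathbb{Z})=1$), the only connected finite \'etale cover is the identity. Then each $H\cong\mathbb{P}^{n-1}_{\mathbb{Z}}$ has generic fiber $\mathbb{P}^{n-1}_{\mathbb{Q}}$, which is geometrically integral. Alternatively, Remark~\ref{rem:smooth-boundary} applies directly.

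Theorem~\ref{thm:intro-perfect-kernel} then gives
\[
\EtFundGrp{U_{\mathcal{A}}}^{\mathrm{solv}}\cong\EtFundGrp{\mathbb{P}^n_{\mathbb{Z}}}^{\mathrm{solv}}=1 .
\]

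For the second assertion, assume $n\ge 2$ and that the geometric generic arrangement is generic in codimension two. Set $G=\EtFundGrp{U_{\mathcal{A}}}$. Let $U_{\bar{\mathbb{Q}}}$ be the geometric generic fiber, let $K\subset G$ be the image of $\EtFundGrp{U_{\bar{\mathbb{Q}}}}\to G$, and use the homotopy exact sequence
\[
\EtFundGrp{U_{\bar{\mathbb{Q}}}}\to\EtFundGrp{U_{\mathbb{Q}}}\to G_{\mathbb{Q}}\to1 .
\]
By L\^{e}--Saito and the comparison theorem, the geometric group is the profinite completion of an abelian group, hence abelian, so $K$ is abelian. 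The quotient $G/K$ is a quotient of $G_{\mathbb{Q}}$; it sits in a surjection $G_{\mathbb{Q}}\twoheadrightarrow G/K$ because $\EtFundGrp{U_{\mathbb{Q}}}\to G$ is surjective, $U_{\mathbb{Q}}$ being dense in the normal connected scheme $U_{\mathcal{A}}$.

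The step I expect to be the main obstacle is showing that $G/K$ is trivial, or at least prosolvable. The route I would try first is the following. A finite quotient of $G/K$ corresponds to a connected finite \'etale cover $V\to U_{\mathcal{A}}$ that becomes trivial over $\bar{\mathbb{Q}}$, so $V$ has the form $U_{\mathcal{A}}\otimes\mathcal{O}_L$ generically for a number field $L$. Its normalization over $\mathbb{P}^n_{\mathbb{Z}}$ is then the normalization of $\mathbb{P}^n_{\mathcal{O}_L}$, and it is \'etale over $U_{\mathcal{A}}$. Since $U_{\mathcal{A}}$ meets every fiber over $\Spec\mathbb{Z}$ (the arrangement is finite, and each fiber is a projective space over a field, hence not covered once one restricts to finite residue fields with enough points, or in general by dimension), $L/\mathbb{Q}$ must be unramified everywhere, hence $L=\mathbb{Q}$.

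So $G/K=1$, which gives $G=K$ abelian. Then $G=G^{\mathrm{solv}}=1$. If the fiber argument is delicate, for instance because a fiber over a small prime is entirely contained in $D$, I would instead just use that $G$ is an extension of a quotient of $G/K$ by the abelian group $K$. That still requires prosolvability of $G/K$, so the unramifiedness argument above is the key point, and it works since $\Supp D$ is horizontal and cannot contain a whole fiber.
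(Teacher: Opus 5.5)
Your proof is correct and follows the paper's route. You verify $(\dagger)$ and combine Theorem~\ref{thm:perfect-kernel} with $\EtFundGrp{\mathbb{P}^n_{\mathbb{Z}}}=1$, then show that $\EtFundGrp{U_{\mathcal{A}}}$ is a quotient of the abelian group $\EtFundGrp{U_{\mathcal{A},\overline{\mathbb{Q}}}}$ and conclude from the first assertion. The differences are minor: for condition~\ref{cond:boundary-geometric} you use the simple connectivity of $\mathbb{P}^n_{\mathbb{Z}}$ instead of Remark~\ref{rem:smooth-boundary}, and you prove by hand the surjectivity that the paper takes from the Katz--Lang homotopy exact sequence (by identifying a geometrically trivial cover with $U_{\mathcal{A}}\times_{\mathbb{Z}}\Spec\mathcal{O}_L$, noting that $U_{\mathcal{A}}$ meets every fiber for dimension reasons, and applying Minkowski).
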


The second corollary concerns a continuous representation $\rho:\EtFundGrp{U}\to\GL_n(A)$, where $A$ is a complete Noetherian local ring with finite residue field.
Under the assumptions of Theorem~\ref{thm:intro-perfect-kernel}, $\rho$ extends to $X$ if and only if its residual representation does; see Corollary~\ref{cor:residual-extension}.

\medskip

The paper is organized as follows.
Section~\ref{sec:local} explains basic applications of the absolute Abhyankar lemma.
Section~\ref{sec:dyadic} proves Theorem~\ref{thm:intro-perfect-kernel} and its two corollaries.

\section{Local ramification and Abhyankar's lemma}\label{sec:local}

Unless otherwise stated, all schemes are over $\Spec(\mathbb{Z})$.
We give prime components of divisors their reduced structure.
For an integral scheme $E$, we write $K(E)$ for its function field.

Let $X$ be a connected locally Noetherian scheme.
By~\cite[Expos\'e~V, \S~7]{MR0354651}, the category of finite \'etale coverings of $X$ is a Galois category.
Hence, for a geometric point $\bar\eta \to X$, we denote the associated profinite group by
\begin{equation*}
	\pi_{1}^{\etale}(X,\bar\eta)
\end{equation*}
and call it the \emph{\'etale fundamental group} of $X$.
Since this group is independent of the base point up to inner automorphism, we fix a geometric point $\bar\eta \to X$ and often omit $\bar\eta$ from the notation.

Let $U\subset X$ be a connected dense open subscheme.
Let $\xi\in X\setminus U$ be a point such that $\mathcal{O}_{X,\xi}$ is a discrete valuation ring.
Choose a strict henselization $\mathcal{O}_{X,\xi}^{\mathrm{sh}}$ and set $K_{\xi}^{\mathrm{sh}}\coloneq\Frac(\mathcal{O}_{X,\xi}^{\mathrm{sh}})$.
We write
\begin{equation*}
	I_{\xi}\coloneq\mathrm{Im}\left(\EtFundGrp{\Spec(K_{\xi}^{\mathrm{sh}})}
	\to
	\EtFundGrp{U}\right)
\end{equation*}
and call it the \emph{inertia subgroup attached to $\xi$}.
This subgroup is well defined up to conjugacy.
For a finite quotient $q\colon\EtFundGrp{U}\twoheadrightarrow G$ and the corresponding connected Galois covering $W\to U$, we write
\begin{equation*}
	I_{\xi}(G)\coloneq I_{\xi}(W/U)\coloneq q(I_{\xi}).
\end{equation*}
If $E$ is a prime divisor with generic point $\xi_{E}$ and $X$ is regular at $\xi_{E}$, then we write $I_{E}$ instead of $I_{\xi_{E}}$.

\begin{lemma}\label{lem:boundary-quotient}
	Let $X$ be a connected locally Noetherian scheme, and let $D$ be an effective Cartier divisor on $X$.
	Set $U\coloneq X\setminus\Supp(D)$.
	Assume that $X$ is regular along $D$, that is, $\mathcal{O}_{X,x}$ is regular for every $x\in\Supp(D)$.
	Then $U$ is connected, and the inclusion $U\hookrightarrow X$ induces a surjection
	\begin{equation*}
		\EtFundGrp{U}\twoheadrightarrow\EtFundGrp{X}.
	\end{equation*}
	Moreover, its kernel is the closed normal subgroup generated by the inertia subgroups $I_E$, where $E$ ranges over the prime components of $D$.
\end{lemma}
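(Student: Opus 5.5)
Since $X$ is regular along $D$, every local ring $\mathcal{O}_{X,x}$ with $x\in\Supp(D)$ is a regular local ring, hence a normal domain. First, $U$ is dense in $X$ and connected. A Cartier divisor is locally cut out by a nonzerodivisor, so $\Supp(D)$ contains no generic point of an irreducible component of $X$, and $U$ is dense. For connectedness, suppose $U=U_1\sqcup U_2$ with both pieces open, closed and nonempty. Their closures $\overline{U_1},\overline{U_2}$ cover $X$, and since $X$ is connected they meet at some point $x$. This point lies in $\Supp(D)$, because on $U$ the closures are just $U_1,U_2$. Near $x$, the scheme $\Spec\mathcal{O}_{X,x}$ is integral because $\mathcal{O}_{X,x}$ is regular. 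Its generic point lies in $U$, hence in exactly one $U_i$, whereas both $\overline{U_i}$ pass through $x$. This is a contradiction, so $U$ is connected.

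\textbf{Surjectivity.} The map on fundamental groups is surjective if and only if, for every connected finite \'etale $X'\to X$, the restriction $X'\times_X U$ is connected (SGA1, V, 6.9). The hypotheses are stable under finite \'etale base change: $X'$ is regular along $D'=D\times_X X'$, and $D'$ is again an effective Cartier divisor with complement $X'\times_X U$. So the connectedness just proved, applied to $X'$, gives surjectivity.

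\textbf{The kernel.} Let $N$ be the closed normal subgroup generated by the $I_E$. First, $N\subseteq\ker$. Each $I_E$ comes from $\Spec(K^{\mathrm{sh}}_{\xi_E})\to U$, which factors through $\Spec\mathcal{O}^{\mathrm{sh}}_{X,\xi_E}\to X$. The fundamental group of a strictly henselian local scheme is trivial, so the image of $I_E$ in $\EtFundGrp{X}$ is trivial. Conversely, it suffices to show that every connected finite \'etale $V\to U$ on which $N$ acts trivially (equivalently, whose Galois closure has trivial inertia at all $\xi_E$) extends to a finite \'etale covering of $X$. Take the Galois closure $W\to U$ and let $\overline W$ be the normalization of $X$ in $W$; this is finite over $X$ if we work locally and use that regular local rings are excellent enough here. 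Alternatively, one can avoid finiteness issues by noting that $X$ is regular at each point and normalization of a normal Noetherian domain in a finite separable extension is finite, since the extension is separable and the trace form argument applies.

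\textbf{Purity step.} Triviality of $I_E(W/U)$ means $\overline W\to X$ is unramified over each $\xi_E$, so it is \'etale over every codimension-one point of $X$. Over $U$ it is \'etale by construction. At a point $x\in\Supp(D)$, the ring $\mathcal{O}_{X,x}$ is regular, so Zariski–Nagata purity of the branch locus (SGA1, X, 3.1) shows that $\overline W\times_X\Spec\mathcal{O}_{X,x}$ is \'etale over $\Spec\mathcal{O}_{X,x}$. Hence $\overline W\to X$ is finite \'etale and restricts to $W$ over $U$. It follows that the quotient $\EtFundGrp{U}\to\mathrm{Gal}(W/U)$ factors through $\EtFundGrp{X}$, which gives $\ker\subseteq N$. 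The main obstacle is this purity step, together with the bookkeeping needed to check that normalization behaves correctly under the only-along-$D$ regularity hypothesis. That bookkeeping is handled by working locally at points of $\Supp(D)$, where $X$ is normal, while away from $\Supp(D)$ nothing changes.
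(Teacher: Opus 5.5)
Your proofs that $U$ is connected (density of $U$ plus irreducibility of $\Spec(\mathcal{O}_{X,x})$ at boundary points), that the map is surjective (stability of the hypotheses under finite \'etale base change, plus SGA1~V~6.9), and that $N\subseteq\ker$ are correct. The topological connectedness argument is a more elementary substitute for the paper's appeal to full faithfulness via SGA2~XIV~1.4.

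The gap is in the converse inclusion. You never show that the normalization $\overline W\to X$ is finite over a neighbourhood of a point $x\in\Supp(D)$, and this is exactly what ``Hence $\overline W\to X$ is finite \'etale'' requires. Neither reason you give works:
\begin{itemize}
\item The hypothesis is regularity only \emph{along} $D$, so $X$ is not regular at each point. You also cannot assume that some neighbourhood of $x$ is normal, since without excellence-type hypotheses the normal locus need not be open.
\item Regular local rings are not excellent in general, and excellence is not what is needed here anyway.
\end{itemize}
What your local argument does prove is that $\overline W\times_X\Spec(\mathcal{O}_{X,x})$ is finite over the Noetherian normal domain $\mathcal{O}_{X,x}$ (by separability and the trace form), and then finite \'etale by purity. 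For an integral morphism, finiteness over the local scheme at $x$ does not by itself give finiteness over an open neighbourhood of $x$. So the passage from $\Spec(\mathcal{O}_{X,x})$ back to $X$ is missing.

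The step can be repaired as follows.
\begin{enumerate}
\item Take an affine open $\Spec(R)\ni x$ on which $D=V(f)$, and let $\mathfrak p$ be the prime of $x$.
\item Let $B$ be the coordinate ring of the affine scheme $W\times_X\Spec(R)$, and let $C$ be the integral closure of $R$ in $B$.
\item Choose a finite $R$-subalgebra $C'\subseteq C$ with $C'[1/f]=B$ and $C'_{\mathfrak p}=C_{\mathfrak p}$. This is possible because $C_{\mathfrak p}$ is finite over $R_{\mathfrak p}$, and because $f^{N}b\in C$ for each $b\in B$ and $N\gg0$.
\item The non-\'etale locus of the finite morphism $\Spec(C')\to\Spec(R)$ has closed image avoiding $\mathfrak p$. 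Hence $C'_g$ is finite \'etale over $R_g$ for some $g\notin\mathfrak p$.
\item Then $C'_g=C_g$. At primes $\mathfrak q$ with $f\notin\mathfrak q$ one has $C'_{\mathfrak q}=B_{\mathfrak q}$. At primes with $f\in\mathfrak q$ the ring $R_{\mathfrak q}$ is regular, so $C'_{\mathfrak q}$ is normal with $f$ a nonzerodivisor, hence integrally closed in $B_{\mathfrak q}=C'_{\mathfrak q}[1/f]$.
\end{enumerate}

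Alternatively, argue as the paper does. Extend $W$ over each $\Spec(\mathcal{O}_{X,x})$ by purity, spread the extension and its identification with $W$ out to a Zariski neighbourhood by a limit argument, and glue using full faithfulness of restriction from $X$ to $U$. That full faithfulness already follows from your connectedness argument: the same reasoning shows that open-and-closed subsets of $Y\times_X U$ extend uniquely to $Y$ for every finite \'etale $Y\to X$.
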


\begin{proof}
	Since $D$ is an effective Cartier divisor, the open immersion $U\hookrightarrow X$ is quasi-compact.
	For each $x\in\Supp(D)$, let $f\in \mathcal{O}_{X,x}^{\mathrm{sh}}$ be a local equation of the pullback of $D$.
	Then
	\begin{equation*}
		U\times_X\Spec(\mathcal{O}_{X,x}^{\mathrm{sh}})=\Spec(\mathcal{O}_{X,x}^{\mathrm{sh}}[1/f]).
	\end{equation*}
	Since $\mathcal{O}_{X,x}^{\mathrm{sh}}$ is a regular local domain and $f\neq 0$, this scheme is nonempty and connected.
	Hence restriction of finite \'etale coverings from $X$ to $U$ is fully faithful by~\cite[Expos\'e~XIV, Corollaire~1.4 and Remarques~1.6\textup{(b)}]{MR476737}.
	Full faithfulness gives a bijection
	\begin{equation*}
		\operatorname{Hom}_X(X,X\amalg X)
		\xrightarrow{\ \sim\ }
		\operatorname{Hom}_U(U,U\amalg U).
	\end{equation*}
	These morphisms correspond to open-and-closed decompositions of $X$ and $U$, respectively.
	Since $X$ is connected, $U$ has no nontrivial open-and-closed decomposition.
	Thus $U$ is connected.
	Hence the homomorphism $\EtFundGrp{U}\to\EtFundGrp{X}$ is surjective by~\cite[Expos\'e~V, Proposition~6.9]{MR0354651}.

	By the Zariski--Nagata purity theorem \cite[Expos\'e~X, Th\'eor\`eme~3.1]{MR0354651}, applied over the regular local schemes $\Spec(\mathcal{O}_{X,x})$ for $x\in\Supp(D)$, a connected finite \'etale Galois covering $W\to U$ extends locally across $D$ if and only if $I_E(W/U)=1$ for every prime component $E$ of $D$.
	These local extensions glue uniquely by full faithfulness.
	Hence the description of the kernel follows.
\end{proof}

For an effective Cartier divisor $D$ on $X$, we say that $D$ has \emph{strict normal crossings at} $x\in D$ if $\mathcal{O}_{X,x}$ is regular and a local equation of $D$ is a product of distinct members of a regular system of parameters.
We say that $D$ has \emph{normal crossings at} $x$ if its pullback to an \'etale neighbourhood of $x$ is a strict normal crossings divisor.
The divisor $D$ is called a \emph{normal crossings divisor} if it has this property at every point of its support (see~\cite[Definition~1.8.2]{MR316453}).

A prime divisor is called \emph{horizontal} if it dominates $\Spec(\mathbb{Z})$.
An effective Cartier divisor is called \emph{horizontal} if all its prime components are horizontal.

\begin{lemma}\label{lem:inertia-control}
	Let $X$ be a connected locally Noetherian scheme, and let $D$ be a horizontal effective Cartier divisor on $X$.
	Set $U\coloneq X\setminus\Supp(D)$.
	Let $E$ be a prime component of $D$.
	Let $p$ be a prime.
	Assume the following conditions:
	\begin{enumerate}[label=\textup{(\alph*)}]
		\item\label{cond:inertia-regular}
		      $X$ is regular along $D$.
		\item\label{cond:inertia-nc}
		      $E$ contains a point of residue characteristic $p$ at which $D$ has normal crossings.
	\end{enumerate}
	Then, for every connected finite \'etale Galois covering $W\to U$, we have
	\begin{equation*}
		p\nmid \CardinalityOfSet{I_E(W/U)}.
	\end{equation*}
\end{lemma}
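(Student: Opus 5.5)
We work at the normal crossings point $x\in E$ of residue characteristic $p$ given by condition~\ref{cond:inertia-nc}, and compare the inertia group $I_E(W/U)$ with the local fundamental group near $x$.

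\textbf{Step 1: reduction to the strictly henselian local scheme.} Pass to the strict henselization $R\coloneq\mathcal{O}_{X,x}^{\mathrm{sh}}$. It is regular by condition~\ref{cond:inertia-regular}, and since $D$ has normal crossings at $x$, a local equation of $D$ in $R$ has the form $f=t_1\cdots t_r$. Here $t_1,\dots,t_r$ is part of a regular system of parameters; an étale neighbourhood is subsumed into the strict henselization. Some $t_i$, say $t_1$, cuts out a branch of $E$ through $x$. Set $\mathfrak{p}=(t_1)$, a height one prime of $R$ lying over the generic point $\xi_E$. Then $R_{\mathfrak{p}}^{\mathrm{sh}}$ is a strict henselization of $\mathcal{O}_{X,\xi_E}$. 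Hence, up to conjugacy, $I_E$ is the image of $\EtFundGrp{\Spec(\Frac R_{\mathfrak p}^{\mathrm{sh}})}\to\EtFundGrp{\Spec R[1/f]}\to\EtFundGrp{U}$. It therefore suffices to show that every finite quotient of $\EtFundGrp{\Spec R[1/f]}$ arising from $W$ has image of inertia at $\mathfrak p$ of order prime to $p$.

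\textbf{Step 2: absolute Abhyankar lemma.} Pull $W$ back to $\Spec R[1/f]$ and take a connected component $W_0$, which is Galois over $\Spec R[1/f]$ with group contained in $G=\mathrm{Gal}(W/U)$. Its inertia at $\mathfrak p$ is a conjugate of $I_E(W/U)$. The key input is the absolute purity and Abhyankar lemma (Grothendieck--Murre, SGA~1 XIII~5.3, or Gabber's absolute purity). It says that a finite étale covering of $R[1/f]$ that is tamely ramified along the $t_i$ becomes étale after adjoining $t_i^{1/n_i}$ with $n_i$ prime to $p$. Moreover, its tame fundamental group is $\prod_i\hat{\mathbb Z}^{(p')}(1)$, which is prime to $p$.

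\textbf{Step 3: tameness.} This is the main obstacle. We must show that the covering is tame along $t_1$, i.e.\ that the inertia at $\mathfrak p$ has no wild (pro-$p$) part. Here horizontality enters: $E$ dominates $\Spec\mathbb Z$, so the residue field $\kappa(\xi_E)$ has characteristic $0$. Thus $\mathcal{O}_{X,\xi_E}$ is a DVR of mixed or equal characteristic $0$ with residue characteristic $0$, and every finite extension is tamely ramified there. The same holds at each $t_i$, since all components of $D$ are horizontal. Hence $W_0$ is tame along $D$ in the sense of SGA~1 XIII. The absolute Abhyankar lemma then applies, and it realizes $W_0$ as a quotient of a Kummer covering $R[1/f][t_1^{1/n_1},\dots,t_r^{1/n_r}]$ with $p\nmid n_i$; these $n_i$ are prime to $p$ because $p$ is invertible in none of them, as $p$ lies in the maximal ideal of $R$. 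The inertia at $\mathfrak p$ of such a covering is cyclic of order dividing $n_1$.

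\textbf{Step 4: conclusion.} Since $I_E(W/U)$ is conjugate to the inertia of $W_0$ at $\mathfrak p$, its order divides $n_1$, and so $p\nmid\CardinalityOfSet{I_E(W/U)}$. The one technical point to check is that the Abhyankar lemma is stated for regular local rings of mixed characteristic with the divisor in normal crossings (not just smooth over a base). If the available reference only covers the strict normal crossings case, we use the étale-local reduction from Step~1, which preserves the strict henselization.
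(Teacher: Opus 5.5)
Your proposal is correct and follows essentially the same route as the paper. You pass to $\mathcal{O}_{X,x}^{\mathrm{sh}}$ at the normal crossings point, get tameness along every branch because horizontal components have generic residue characteristic $0$, apply the absolute Abhyankar lemma to get ramification indices prime to $p$, and identify the inertia along a branch over $E$ with $I_E(W/U)$; you do this last step via $R_{\mathfrak p}^{\mathrm{sh}}$ being a strict henselization of $\mathcal{O}_{X,\xi_E}$, where the paper uses invariance of ramification indices under \'etale base change and limits. One correction: $p\nmid n_i$ is exactly the conclusion of the absolute Abhyankar lemma, forced because the covering is \'etale over the characteristic-$p$ points of $\Spec R[1/f]$ (these exist since $D$ is horizontal). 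It does not follow merely from $p$ lying in the maximal ideal of $R$, so replace your gloss in Step~3 by that citation.
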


\begin{proof}
	The scheme $U$ is connected by Lemma~\ref{lem:boundary-quotient}.
	Choose a point $x\in E$ as in condition~\ref{cond:inertia-nc}.
	By the normal crossings assumption, $\mathcal{O}_{X,x}^{\mathrm{sh}}$ is a regular local ring, and the pullback of $D$ to $\Spec(\mathcal{O}_{X,x}^{\mathrm{sh}})$ is defined by $t_1\cdots t_s=0$, where $t_1,\ldots,t_s$ form part of a regular system of parameters.
	Hence we obtain
	\begin{equation*}
		V\coloneq U\times_X\Spec(\mathcal{O}_{X,x}^{\mathrm{sh}})
		=\Spec\bigl(\mathcal{O}_{X,x}^{\mathrm{sh}}[1/(t_1\cdots t_s)]\bigr).
	\end{equation*}
	The pullback $W\times_U V\to V$ is finite \'etale.
	For each $i$, the generic point of $V(t_i)$ lies over the generic point of a horizontal component of $D$, and hence has residue characteristic zero.
	Therefore this covering is tamely ramified along each $V(t_i)$.
	The absolute Abhyankar lemma \cite[Expos\'e~XIII, Appendice~I, Proposition~5.2]{MR0354651} shows that all its ramification indices along the divisors $V(t_i)$ are prime to $p$.
	Choose a component $E'$ of the pullback of $D$ to $\Spec(\mathcal{O}_{X,x}^{\mathrm{sh}})$ lying over $E$.
	Strict localization is an inverse limit of \'etale neighbourhoods.
	Ramification indices are unchanged under \'etale base change and on passing to this limit.
	Hence the ramification index along $E'$ is equal to that along $E$.
	The latter equals $\CardinalityOfSet{I_E(W/U)}$, since $E$ has generic residue characteristic zero.
	This proves the assertion.
\end{proof}

The following proposition generalizes~\cite[Proposition~9.3.6]{bost2022quasiprojectiveformalanalyticarithmeticsurfaces}.

\begin{proposition}\label{prop:boundary-invariance}
	Let $X$ be a connected locally Noetherian scheme, and let $D$ be a horizontal effective Cartier divisor on $X$.
	Set $U\coloneq X\setminus\Supp(D)$.
	Assume the following conditions:
	\begin{enumerate}[label=\textup{(\alph*)}]
		\item\label{cond:invariance-regular}
		      $X$ is regular along $D$.
		\item\label{cond:invariance-nc}
		      For every prime $p$, every prime component of $D$ contains a point of residue characteristic $p$ at which $D$ has normal crossings.
	\end{enumerate}
	Then $U$ is connected, and the inclusion $U\hookrightarrow X$ induces an isomorphism
	\begin{equation*}
		\EtFundGrp{U}\xrightarrow{\ \sim\ }\EtFundGrp{X}.
	\end{equation*}
\end{proposition}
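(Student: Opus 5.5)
The plan is to combine Lemma~\ref{lem:boundary-quotient} with Lemma~\ref{lem:inertia-control}, the latter applied once for each prime $p$. Lemma~\ref{lem:boundary-quotient} applies because of condition~\ref{cond:invariance-regular}. It shows that $U$ is connected and that $\EtFundGrp{U}\to\EtFundGrp{X}$ is surjective. It also identifies the kernel $N$ as the closed normal subgroup generated by the inertia subgroups $I_E$, with $E$ running over the prime components of $D$. So it suffices to show $I_E=1$ for every prime component $E$.

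Since $\EtFundGrp{U}$ is profinite, a closed subgroup is trivial exactly when its image in every finite quotient is trivial. Fix a finite quotient $q\colon\EtFundGrp{U}\twoheadrightarrow G$, and let $W\to U$ be the corresponding connected Galois covering. Then $I_E(W/U)=q(I_E)$ is a finite group. By condition~\ref{cond:invariance-nc}, for each prime $p$ the component $E$ contains a normal crossings point of residue characteristic $p$. Lemma~\ref{lem:inertia-control} then gives $p\nmid\CardinalityOfSet{I_E(W/U)}$. As this holds for all primes $p$, we get $\CardinalityOfSet{I_E(W/U)}=1$.

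Hence $q(I_E)=1$ for every finite quotient $q$, so $I_E=1$ for each $E$. It follows that $N=1$, and the surjection is an isomorphism.

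Nothing here is a serious obstacle, since the substantive local input (absolute Abhyankar at normal crossings points) is already contained in Lemma~\ref{lem:inertia-control}. The one point to watch is that the lemma's hypotheses are met uniformly: the covering $W\to U$ is arbitrary, and the prime $p$ varies while $W$ is kept fixed. Both are permitted by the way Lemma~\ref{lem:inertia-control} is stated.
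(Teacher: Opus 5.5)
Your proof is correct and follows the paper's argument. Lemma~\ref{lem:boundary-quotient} reduces the claim to the boundary inertia having trivial image in every finite quotient, and Lemma~\ref{lem:inertia-control}, applied for every prime $p$, forces $\CardinalityOfSet{I_E(W/U)}=1$; the paper phrases this step as a contradiction using a prime divisor of a nontrivial $I_E(W/U)$.
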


\begin{proof}
	By Lemma~\ref{lem:boundary-quotient}, the scheme $U$ is connected, the natural homomorphism $\EtFundGrp{U}\to\EtFundGrp{X}$ is surjective, and the kernel is the closed normal subgroup generated by the inertia subgroups.
	Let $W\to U$ be a connected finite \'etale Galois covering.
	Assume that $I_{E}(W/U)$ is nontrivial for some prime component $E$ of $D$, and let $p$ be a prime divisor of its order.
	By condition~\ref{cond:invariance-nc}, there exists a point of $E$ of residue characteristic $p$ at which $D$ has normal crossings.
	By Lemma~\ref{lem:inertia-control}, the order of $I_{E}(W/U)$ is prime to $p$, a contradiction.
	Hence all boundary inertia groups have trivial image in every finite quotient of $\EtFundGrp{U}$, and therefore the kernel of the natural homomorphism $\EtFundGrp{U}\to\EtFundGrp{X}$ is also trivial.
	This proves the assertion.
\end{proof}

\begin{example}\label{ex:section1}
	The following are standard applications of Proposition~\ref{prop:boundary-invariance}.
	\begin{enumerate}[(1)]
		\item\label{ex:section1_1}
		      We recall that
		      \begin{equation*}
			      \EtFundGrp{\Spec(\mathbb{Z})}
			      =\EtFundGrp{\mathbb{P}^{m}_{\mathbb{Z}}}=1.
		      \end{equation*}
		      The equality $\EtFundGrp{\Spec(\mathbb{Z})}=1$ follows from Minkowski's theorem \cite[Chapter~III, Theorem~2.18]{MR1697859}.
		      The assertion for projective space follows from the proper homotopy exact sequence and its geometric simple connectedness \cite[Expos\'e~X, Corollaire~1.4, and Expos\'e~XI, Proposition~1.1]{MR0354651}.
		      Applying Proposition~\ref{prop:boundary-invariance} to a hyperplane at infinity also gives $\EtFundGrp{\mathbb{A}^{m}_{\mathbb{Z}}}=1$.
		\item
		      Let $n\geq3$, and let $M_{0,n,\mathbb{Z}}$ be the moduli scheme of smooth proper curves of genus zero with $n$ ordered disjoint marked points.
		      Then we have
		      \begin{equation*}
			      \EtFundGrp{M_{0,n,\mathbb{Z}}}=1.
		      \end{equation*}
		      Indeed, the stable compactification $\overline{M}_{0,n,\mathbb{Z}}$ is smooth and proper over $\mathbb{Z}$ with geometrically connected fibers, and its boundary is a relative normal crossings divisor whose components are surjective over $\Spec(\mathbb{Z})$ (see~\cite[Theorem~2.7]{MR0702953} and~\cite[Facts~1 and~2]{MR1034665}).
		      By Proposition~\ref{prop:boundary-invariance}, the inclusion $M_{0,n,\mathbb{Z}}\hookrightarrow\overline M_{0,n,\mathbb{Z}}$ induces an isomorphism on fundamental groups.
		      Sending the first three markings to $0,1,\infty$ identifies $M_{0,n,\mathbb{Q}}$ with an open subscheme of $\mathbb{A}^{n-3}_{\mathbb{Q}}$.
		      Hence the geometric generic fiber of the compactification is simply connected by birational invariance (see~\cite[Expos\'e~X, Corollaire~3.4]{MR0354651}).
		      The assertion follows from the proper homotopy exact sequence~\cite[Expos\'e~X, Corollaire~1.4]{MR0354651} and the equality in~\ref{ex:section1_1}.
		\item
		      Let $a,b\geq2$ be coprime integers.
		      Then
		      \begin{equation*}
			      \EtFundGrp{\Spec\mathbb{Z}[x,y,1/(x^a-y^b)]}=1.
		      \end{equation*}
		      Indeed, $D=V(x^a-y^b)$ is a horizontal prime divisor in $\mathbb{A}^{2}_{\mathbb{Z}}$.
		      For every prime $p$, consider the point with maximal ideal $\mathfrak{m}=(p,x-1,y-1)$.
		      In $\mathfrak{m}/\mathfrak{m}^2$, the class of the equation of $D$ is
		      \begin{equation*}
			      a(x-1)-b(y-1).
		      \end{equation*}
		      This class is nonzero since $p$ cannot divide both $a$ and $b$.
		      Hence the equation is part of a regular system of parameters, and therefore $D$ has strict normal crossings at this point.
		      The assertion follows from Proposition~\ref{prop:boundary-invariance} and the equality in~\ref{ex:section1_1}.
	\end{enumerate}
\end{example}

\section{Prosolvable boundary invariance at residue characteristic \texorpdfstring{$2$}{2}}\label{sec:dyadic}

In this section, we prove the main theorem of the present paper.
We begin with the abelian case.
The following lemma gives sufficient conditions for deleting a horizontal divisor to leave the abelianization of the \'etale fundamental group unchanged.

\begin{lemma}\label{lem:abelian-invariance}
	Let $X$ be a connected locally Noetherian scheme, and let $D$ be a horizontal effective Cartier divisor.
	Set $U\coloneq X\setminus\Supp(D)$.
	Assume the following conditions:
	\begin{enumerate}[label=\textup{(\alph*)}]
		\item\label{cond:abelian-regular}
		      $X$ is regular along $D$.
		\item\label{cond:abelian-nc-two}
		      Every prime component of $D$ contains a point of residue characteristic $2$ at which $D$ has normal crossings.
		\item\label{cond:abelian-geometric}
		      Every prime component of $D$ has geometrically integral generic fiber.
	\end{enumerate}
	Then $U$ is connected, and the inclusion $U\hookrightarrow X$ induces an isomorphism
	\begin{equation*}
		\Abelianization{\EtFundGrp{U}}
		\xrightarrow{\ \sim\ }
		\Abelianization{\EtFundGrp{X}}.
	\end{equation*}
\end{lemma}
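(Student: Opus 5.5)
The plan is to show that every boundary inertia group has trivial image in every finite abelian quotient of $\EtFundGrp{U}$, and then to conclude with Lemma~\ref{lem:boundary-quotient}.

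By Lemma~\ref{lem:boundary-quotient}, $U$ is connected, $\EtFundGrp{U}\twoheadrightarrow\EtFundGrp{X}$ is surjective, and its kernel $N$ is the closed normal subgroup generated by the $I_E$. The induced map on abelianizations is surjective. Its kernel is the image of $N$ in $\Abelianization{\EtFundGrp{U}}$. So it suffices to show the following: for every finite abelian quotient $q\colon\EtFundGrp{U}\twoheadrightarrow G$, with corresponding Galois covering $W\to U$, and every prime component $E$ of $D$, we have $I_E(G)=1$. Set $e\coloneq\CardinalityOfSet{I_E(G)}$. Lemma~\ref{lem:inertia-control} with $p=2$, using condition~\ref{cond:abelian-nc-two}, shows that $e$ is odd. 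It remains to prove $e\mid 2$, which together with oddness forces $e=1$.

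For the bound $e\mid 2$, I will work at the generic point $\xi_E$, where $\mathcal{O}_{X,\xi_E}$ is a DVR whose residue field $K(E)$ has characteristic zero. The ramification there is tame, so $I_E(G)$ is cyclic of order $e$. Moreover, the tame inertia character
\begin{equation*}
	\theta\colon I_E(G)\xrightarrow{\ \sim\ }\mu_e\bigl(\overline{K(E)}\bigr),\qquad \sigma\mapsto \sigma(\varpi^{1/e})/\varpi^{1/e},
\end{equation*}
for a uniformizer $\varpi$, is equivariant in the following sense. The decomposition group, i.e.\ the image of $\EtFundGrp{\Spec(K_{\xi_E}^{\mathrm{h}})}$ with $K_{\xi_E}^{\mathrm{h}}$ the fraction field of the henselization, surjects onto $\mathrm{Gal}(\overline{K(E)}/K(E))$. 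An element $\tau$ of it satisfies $\theta(\tau\sigma\tau^{-1})=\chi(\tau)\,\theta(\sigma)$, where $\chi$ is the mod-$e$ cyclotomic character of $K(E)$. Since $G$ is abelian, conjugation by $\tau$ acts trivially on $I_E(G)$. Hence $\chi$ is trivial on all of $\mathrm{Gal}(\overline{K(E)}/K(E))$, that is, $\mu_e\subset K(E)$.

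Condition~\ref{cond:abelian-geometric} says that the generic fiber of $E$ is geometrically integral over $\mathbb{Q}$, so $\mathbb{Q}$ is algebraically closed in $K(E)$. Therefore $\mu_e\subset\mathbb{Q}$, which gives $e\mid 2$, and hence $e=1$. Then every abelian quotient of $\EtFundGrp{U}$ kills all $I_E$, hence kills $N$, and the map on abelianizations is an isomorphism.

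The main obstacle is making the equivariance step rigorous. One must check that the decomposition group surjects onto the absolute Galois group of the residue field, which holds because the henselization of a DVR has étale extensions lifting all separable residue extensions. One must also check the conjugation formula for tame inertia, via Kummer theory over the maximal unramified extension of the henselian DVR. I would carry this out by reducing to the henselian DVR $\mathcal{O}_{X,\xi_E}^{\mathrm{h}}$ and using the standard exact sequence $1\to I^{\mathrm{tame}}\to D\to\mathrm{Gal}(\overline{K(E)}/K(E))\to1$ in residue characteristic zero.
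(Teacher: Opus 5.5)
Your proposal is correct and follows the paper's proof essentially step for step. You get oddness of $\CardinalityOfSet{I_E(G)}$ from Lemma~\ref{lem:inertia-control} at $p=2$, then $\mu_e\subset K(E)$ from the Galois-equivariant tame inertia isomorphism and the commutativity of $G$, and finally $e\mid 2$ because $\mathbb{Q}$ is algebraically closed in $K(E)$. Your explicit check that the decomposition group surjects onto $\mathrm{Gal}(\overline{K(E)}/K(E))$ just spells out what the paper packages into its citation of the local description of tame inertia.
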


\begin{proof}
	By Lemma~\ref{lem:boundary-quotient}, the scheme $U$ is connected, the natural homomorphism $\EtFundGrp{U}\to\EtFundGrp{X}$ is surjective, and the kernel is the closed normal subgroup generated by the inertia subgroups.
	Let $G$ be a finite abelian quotient of $\EtFundGrp{U}$, let $E$ be a prime component of $D$, and put $r=\CardinalityOfSet{I_E(G)}$.
	By condition~\ref{cond:abelian-nc-two}, there is a point of $E$ of residue characteristic $2$ at which $D$ has normal crossings.
	Lemma~\ref{lem:inertia-control}, applied at this point, gives $2\nmid r$.

	Put $k=K(E)$.
	Since $E$ is horizontal, $\operatorname{char}(k)=0$.
	The local description of tame inertia gives an isomorphism
	\begin{equation*}
		I_E(G)\simeq\mu_r(k^{\mathrm{sep}})
	\end{equation*}
	under which the action induced by local Galois conjugation on the left corresponds to the natural action of $\AbsGalGrp{k}$ on the right (see~\cite[Expos\'e~X, \S~3]{MR0354651}).
	Since $G$ is abelian, conjugation acts trivially on $I_E(G)$.
	Hence every $r$-th root of unity is fixed by $\AbsGalGrp{k}$ and therefore belongs to $k$.
	By condition~\ref{cond:abelian-geometric}, $\mathbb{Q}$ is algebraically closed in $k$.
	Therefore the only roots of unity in $k$ are $1$ and $-1$.
	Since $k$ contains a primitive $r$-th root of unity, we obtain $r\mid2$.
	Together with $2\nmid r$, this gives $r=1$.
	Therefore every boundary inertia subgroup has trivial image in $G$.
	The description of the kernel above then shows that $\EtFundGrp{U}\to\EtFundGrp{X}$ induces an isomorphism on abelianizations.
\end{proof}

We turn to maximal prosolvable quotients.
For this purpose, we need the abelian result to hold after every connected finite \'etale covering of $X$.
Conditions~\ref{cond:abelian-regular} and~\ref{cond:abelian-nc-two} are preserved under such coverings.
However, the geometric integrality condition~\ref{cond:abelian-geometric} does not hold for the prime components of the pulled-back divisor in general.
We therefore require this condition after every connected finite \'etale base change, as in the following definition.

\begin{definition}\label{def:boundary-condition}
	Let $X$ be a connected locally Noetherian scheme, and let $D$ be a horizontal effective Cartier divisor.
	We say that $(X,D)$ satisfies condition~$(\dagger)$ if the following conditions hold:
	\begin{enumerate}[label=\textup{(\alph*)}]
		\item\label{cond:boundary-regular}
		      $X$ is regular along $D$.
		\item\label{cond:boundary-nc-two}
		      Every prime component of $D$ contains a point of residue characteristic $2$ at which $D$ has normal crossings.
		\item\label{cond:boundary-geometric}
		      For every connected finite \'etale covering $X'\to X$, every prime component of $D\times_X X'$ has geometrically integral generic fiber.
	\end{enumerate}
\end{definition}

\begin{remark}\label{rem:smooth-boundary}
	Let $E$ be a scheme smooth and of finite type over $\Spec(\mathbb{Z})$.
	Assume that $E\to\Spec(\mathbb{Z})$ is surjective and that $E_{\mathbb{Q}}$ is geometrically connected.
	Then every irreducible component of every finite \'etale covering of $E$ has geometrically integral generic fiber.
	Since $E$ is regular and connected, it suffices to consider a connected finite \'etale covering $T\to E$.
	By the homotopy exact sequence of Katz--Lang \cite[Preliminaries, Lemma~2(2)]{MR659153} and Example~\ref{ex:section1}\ref{ex:section1_1}, the natural homomorphism
	\begin{equation*}
		\EtFundGrp{E_{\overline{\mathbb{Q}}}}
		\to \EtFundGrp{E}
	\end{equation*}
	is surjective.
	Hence $T_{\overline{\mathbb{Q}}}$ is connected.
	Since it is also smooth over $\overline{\mathbb{Q}}$, it is integral.
	Thus a sufficient condition for condition~\ref{cond:boundary-geometric} is that every prime component $E$ of $D$ be smooth, of finite type, and surjective over $\Spec(\mathbb{Z})$, with geometrically connected generic fiber.
\end{remark}

\begin{lemma}\label{lem:finite-etale-stability}
	Let $(X,D)$ satisfy condition~$(\dagger)$, and let $X'\to X$ be a connected finite \'etale covering.
	Then $(X',D\times_{X} X')$ satisfies condition~$(\dagger)$.
\end{lemma}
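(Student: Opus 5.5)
We check the three conditions of $(\dagger)$ for $(X',D')$, where $f\colon X'\to X$ is the given connected finite \'etale covering and $D'\coloneq D\times_X X'$. First, $X'$ is connected and locally Noetherian, being finite over $X$. The pullback $D'$ is an effective Cartier divisor on $X'$, since \'etale morphisms are flat and a local nonzerodivisor pulls back to a nonzerodivisor. Every prime component $E'$ of $D'$ maps onto a prime component $E$ of $D$. Indeed, $f$ is finite and flat, so the generic point of $E'$ lies over the generic point of some component of $D$. Since $E$ dominates $\Spec(\mathbb{Z})$, so does $E'$, and hence $D'$ is horizontal.

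For condition~\ref{cond:boundary-regular}, let $x'\in\Supp(D')$ and $x=f(x')$. The map $\mathcal{O}_{X,x}\to\mathcal{O}_{X',x'}$ is local and \'etale, and regularity ascends along such maps. Hence $\mathcal{O}_{X',x'}$ is regular.

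Condition~\ref{cond:boundary-nc-two} is the step requiring the most care. Let $E'$ be a prime component of $D'$. I would first show that $E'$ is an irreducible component of $f^{-1}(E)$ for some prime component $E$ of $D$. Since $f$ is \'etale, $D'_{\mathrm{red}}=f^{-1}(D_{\mathrm{red}})$ is reduced and \'etale over $D_{\mathrm{red}}$. It follows that $E'\to E$ is finite \'etale over a dense open subset and finite everywhere, hence surjective, since its image is closed and contains the generic point. Choose $x\in E$ of residue characteristic $2$ at which $D$ has normal crossings, and pick $x'\in E'$ over $x$. Then $x'$ has residue characteristic $2$. The strict henselizations satisfy $\mathcal{O}_{X',x'}^{\mathrm{sh}}\cong\mathcal{O}_{X,x}^{\mathrm{sh}}$, because $f$ is \'etale. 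Moreover, the pullback of $D'$ to this ring is the pullback of $D$. Thus the \'etale-local strict normal crossings description of $D$ at $x$ transfers to $D'$ at $x'$. The point to watch is the definition of normal crossings via an \'etale neighbourhood: an \'etale neighbourhood $V\to X$ of $x$ on which $D$ is strict normal crossings gives the \'etale neighbourhood $V\times_X X'$ of $x'$. Strict normal crossings is preserved by the \'etale projection $V\times_XX'\to V$, since regular systems of parameters pull back to regular systems of parameters under local \'etale maps.

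Condition~\ref{cond:boundary-geometric} follows by composition. Let $X''\to X'$ be a connected finite \'etale covering. Then $X''\to X$ is a connected finite \'etale covering, and $D'\times_{X'}X''=D\times_X X''$. Condition~\ref{cond:boundary-geometric} for $(X,D)$ therefore gives the required geometric integrality of the generic fibers of the prime components.
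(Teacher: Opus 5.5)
Your proposal is correct and follows the paper's proof essentially step for step: the Cartier property and regularity come from flat and \'etale base change, condition (c) comes from composing finite \'etale coverings, and condition (b) comes from showing $E'\to E$ is surjective (the map is finite and sends the generic point of $E'$ to that of $E$) and then lifting the normal crossings point along the \'etale map. Your aside that $E'\to E$ is finite \'etale over a dense open subset is not justified as written, but it is also not needed, since surjectivity uses only finiteness and the image of the generic point.
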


\begin{proof}
	We set $D'\coloneq D\times_{X} X'$.
	Since flat base change preserves effective Cartier divisors, $D'$ is an effective Cartier divisor.
	Moreover, \'etale base change preserves regularity.
	In particular, $X'$ is regular along $D'$.
	This proves condition~\ref{cond:boundary-regular}.
	Condition~\ref{cond:boundary-geometric} is preserved by definition.

	Finally, we prove that condition~\ref{cond:boundary-nc-two} holds for each prime component of $D'$.
	Let $E'$ be such a component, and let $\eta'$ be its generic point.
	The image of $\eta'$ under the finite flat morphism $D'\to D$ is the generic point $\eta$ of a prime component $E$ of $D$.
	Since $D'\to D$ is finite, the image of $E'$ is closed.
	Thus $E'\to E$ is surjective.
	In particular, $E'$ is horizontal, and there exists a point $x'\in E'$ above the point $x\in E$ in condition~\ref{cond:boundary-nc-two}.
	Then $x'$ has residue characteristic $2$, and $D'$ has normal crossings at $x'$ by \'etale base change.
	This proves condition~\ref{cond:boundary-nc-two} for $(X',D')$.
\end{proof}

A profinite group $G$ is called \emph{topologically perfect} if $G=\overline{[G,G]}$.
The following is the main theorem of the present paper.

\begin{theorem}\label{thm:perfect-kernel}
	Let $X$ be a connected locally Noetherian scheme, and let $D$ be a horizontal effective Cartier divisor.
	We set $U\coloneq X\setminus\Supp(D)$.
	Assume that $(X,D)$ satisfies condition~$(\dagger)$.
	Then $U$ is connected, and the kernel of the natural homomorphism
	\begin{equation*}
		\EtFundGrp{U}
		\to
		\EtFundGrp{X}
	\end{equation*}
	induced by the inclusion $U\hookrightarrow X$ is topologically perfect.
	In particular, the homomorphism induces an isomorphism
	\begin{equation*}
		\EtFundGrp{U}^{\mathrm{solv}}
		\xrightarrow{\ \sim\ }
		\EtFundGrp{X}^{\mathrm{solv}}.
	\end{equation*}
\end{theorem}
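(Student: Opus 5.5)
Let $N$ denote the kernel of $\EtFundGrp{U}\to\EtFundGrp{X}$; Lemma~\ref{lem:boundary-quotient} gives that $U$ is connected and the map is surjective. The strategy is to show that $N$ has no nontrivial continuous abelian quotient. Since $N$ is a closed normal subgroup of a profinite group, it is the inverse limit of its images in finite quotients of $\EtFundGrp{U}$. It therefore suffices to prove that for every open normal subgroup $M\subset\EtFundGrp{U}$ contained in an open normal subgroup $H$ with $H\supset N$, the image of $N$ in $H/M$ has trivial abelianization. It is cleaner to argue directly: suppose $N\to A$ is a continuous surjection onto a nontrivial finite abelian group, with kernel $N_0$. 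Then $N_0$ is open in $N$, so there is an open normal subgroup $H'$ of $\EtFundGrp{U}$ with $H'\cap N\subset N_0$. Set $H=H'N$, which is open, and replace $N_0$ by $N\cap H'$.

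The key step is to realize $H$ as the fundamental group of a base change. Since $H\supset N$, $H$ is the preimage of an open subgroup of $\EtFundGrp{X}$. It therefore corresponds to a connected finite \'etale covering $X'\to X$, and $H=\EtFundGrp{U'}$ with $U'=U\times_XX'=X'\setminus\Supp(D')$, where $D'=D\times_XX'$. Moreover, the kernel of $\EtFundGrp{U'}\to\EtFundGrp{X'}$ is exactly $N$, because $\EtFundGrp{X'}$ is the corresponding open subgroup of $\EtFundGrp{X}$. By Lemma~\ref{lem:finite-etale-stability}, $(X',D')$ satisfies $(\dagger)$, and in particular satisfies conditions \ref{cond:abelian-regular}--\ref{cond:abelian-geometric} of Lemma~\ref{lem:abelian-invariance}, where \ref{cond:abelian-geometric} is the case $X'\to X'$ trivial of $(\dagger)$\ref{cond:boundary-geometric}. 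Lemma~\ref{lem:abelian-invariance} then gives $\Abelianization{H}\xrightarrow{\sim}\Abelianization{\EtFundGrp{X'}}=\Abelianization{(H/N)}$. Equivalently, $N\subset\overline{[H,H]}$.

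To conclude, the normality of $N$ and $H'$ in $\EtFundGrp{U}$ must be used. Rather than working with a general abelian quotient of $N$, I expect it to be simplest to show $N\subset\overline{[N,N]}$ by a cofinality argument. For any open normal $H'$, apply the above with $H=H'N$ to get $N\subset\overline{[H'N,H'N]}$. Since $H'N/H'\cong N/(N\cap H')$, we have $[H'N,H'N]\subset[N,N]H'$. Hence $N\subset\overline{[N,N]}H'$ for every open normal $H'$. Intersecting over all such $H'$ yields $N\subset\overline{[N,N]}$, since closed subgroups are intersections of their open neighbourhoods $C\cdot H'$. Thus $N$ is topologically perfect.

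For the final assertion, a perfect closed normal subgroup maps trivially to every prosolvable quotient: its image in a finite solvable group is perfect and hence trivial. So $\EtFundGrp{U}\to\EtFundGrp{U}^{\mathrm{solv}}$ factors through $\EtFundGrp{X}=\EtFundGrp{U}/N$, and the universal properties give the inverse isomorphism. The main obstacle is the identification of $H=H'N$ with $\EtFundGrp{U'}$ and of its boundary kernel with $N$; this follows from the Galois correspondence and surjectivity from Lemma~\ref{lem:boundary-quotient} applied to $(X',D')$.
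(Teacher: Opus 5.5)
Your proposal is correct and follows essentially the same route as the paper: you base change to the connected finite \'etale covering of $X$ corresponding to $f(H'N)$, use Lemma~\ref{lem:finite-etale-stability} and Lemma~\ref{lem:abelian-invariance} to get $N\subset\overline{[H'N,H'N]}$, and then pass to finite quotients. Your step $N\subset\overline{[N,N]}H'$ for every open normal $H'$ is a reformulation of the paper's check that $q(\ker f)$ is perfect in every finite quotient $q\colon\EtFundGrp{U}\twoheadrightarrow Q$; with $H'=\ker q$, your $H'N$ is exactly the paper's $f^{-1}(H)$.
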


\begin{proof}
	By Lemma~\ref{lem:boundary-quotient}, the scheme $U$ is connected, and the natural homomorphism $f\colon\EtFundGrp{U}\to\EtFundGrp{X}$ is surjective.
	Let $H\lhd\EtFundGrp{X}$ be an open normal subgroup, and let $X_H\to X$ be the corresponding connected finite \'etale covering.
	Put $D_H=D\times_X X_H$ and $U_H=U\times_X X_H$.
	By Lemma~\ref{lem:finite-etale-stability}, $(X_H,D_H)$ satisfies condition~$(\dagger)$, and the scheme $U_H$ is connected by Lemma~\ref{lem:boundary-quotient}.
	With $\EtFundGrp{U_H}$ and $\EtFundGrp{X_H}$ identified with $f^{-1}(H)$ and $H$, respectively, Lemma~\ref{lem:abelian-invariance} therefore gives an isomorphism
	\begin{equation*}
		\Abelianization{f^{-1}(H)}\xrightarrow{\ \sim\ }\Abelianization{H}.
	\end{equation*}
	Hence $\ker(f)\subset\overline{[f^{-1}(H),f^{-1}(H)]}$ for every such $H$.

	Let $q:\EtFundGrp{U}\twoheadrightarrow Q$ be any finite quotient, and put $N=q(\ker(f))$.
	The quotient $Q/N$ is a quotient of $\EtFundGrp{X}$.
	Taking $H$ to be the kernel of this quotient, we have $q(f^{-1}(H))=N$, and hence
	\begin{equation*}
		N=q(\ker(f))\subset
		q\bigl(\overline{[f^{-1}(H),f^{-1}(H)]}\bigr)=[N,N].
	\end{equation*}
	Thus $N$ is perfect.
	Since this holds in every finite quotient of $\EtFundGrp{U}$, it follows that $\ker(f)=\overline{[\ker(f),\ker(f)]}$, which proves the perfectness assertion.
	If $Q$ is solvable, then $N$ is both perfect and solvable, and hence $N=1$.
	Therefore every finite solvable quotient of $\EtFundGrp{U}$ factors through $\EtFundGrp{X}$, which proves the last assertion.
\end{proof}

Finally, we give two corollaries of Theorem~\ref{thm:perfect-kernel}.
For the first corollary, we apply the topological perfectness of the kernel in Theorem~\ref{thm:perfect-kernel} to representations over complete local rings.

\begin{corollary}\label{cor:residual-extension}
	We keep the notation and assumptions of Theorem~\ref{thm:perfect-kernel}.
	Let $(A,\mathfrak{m},k)$ be a complete Noetherian local ring with finite residue field of characteristic $p$.
	For $n\geq1$, let
	\begin{equation*}
		\rho:\EtFundGrp{U}\to\GL_n(A)
	\end{equation*}
	be a continuous representation, and let $\bar\rho:\EtFundGrp{U}\to\GL_n(k)$ be its reduction.
	Then $\rho$ factors through $\EtFundGrp{X}$ if and only if $\bar\rho$ does.
	When the factorization exists, it is unique.
\end{corollary}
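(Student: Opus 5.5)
Write $f\colon\EtFundGrp{U}\twoheadrightarrow\EtFundGrp{X}$ for the surjection of Lemma~\ref{lem:boundary-quotient} and $K\coloneq\ker(f)$.

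\emph{Uniqueness.} Any factorization $\rho=\rho_X\circ f$ is unique because $f$ is surjective. The direction ``$\rho$ factors $\Rightarrow$ $\bar\rho$ factors'' is immediate by composing with reduction $\GL_n(A)\to\GL_n(k)$.

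\emph{Main direction.} Assume $\bar\rho$ factors, so that $\bar\rho(K)=1$, i.e.\ $\rho(K)\subset\Gamma_1\coloneq\ker\bigl(\GL_n(A)\to\GL_n(k)\bigr)$. It then suffices to show $\rho(K)=1$: a continuous homomorphism killing the closed normal subgroup $K$ descends continuously to $\EtFundGrp{U}/K\simeq\EtFundGrp{X}$, since $f$ is a continuous surjection of profinite groups and hence a quotient map.

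\emph{Structure of $\Gamma_1$.} Since $A$ is complete Noetherian local with finite residue field, $\GL_n(A)=\varprojlim_j\GL_n(A/\mathfrak{m}^j)$ is profinite, each $A/\mathfrak{m}^j$ being finite. The subgroup $\Gamma_1$ is the inverse limit of the kernels $\Gamma_1^{(j)}$ of $\GL_n(A/\mathfrak{m}^j)\to\GL_n(k)$. Each $\Gamma_1^{(j)}$ is a finite $p$-group: it has the filtration by $\ker\bigl(\GL_n(A/\mathfrak{m}^j)\to\GL_n(A/\mathfrak{m}^i)\bigr)$, and successive quotients embed into $M_n(\mathfrak{m}^i/\mathfrak{m}^{i+1})$, which is a finite $k$-vector space. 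Hence $\Gamma_1$ is pro-$p$, and in particular prosolvable.

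\emph{Killing the image.} For each $j$, the image $N_j$ of $K$ in the finite group $\GL_n(A/\mathfrak{m}^j)$ equals the image of $K$ under the finite quotient $q_j\colon\EtFundGrp{U}\twoheadrightarrow Q_j\coloneq\mathrm{Im}(\rho\bmod\mathfrak{m}^j)$. By the proof of Theorem~\ref{thm:perfect-kernel}, $N_j=q_j(K)$ is perfect. It also lies in the $p$-group $\Gamma_1^{(j)}$, so it is solvable, and therefore $N_j=1$. As this holds for all $j$ and $\GL_n(A)=\varprojlim_j\GL_n(A/\mathfrak{m}^j)$, we get $\rho(K)=1$.

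\emph{Where care is needed.} The argument uses perfectness of images of $K$ in finite quotients, not just topological perfectness of $K$. These are equivalent, since a continuous image of a topologically perfect group in a finite group is perfect. Beyond that, the only step needing attention is verifying that $\GL_n(A)$ is profinite and $\Gamma_1$ is pro-$p$, which is routine.
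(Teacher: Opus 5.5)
Your proposal is correct and follows the paper's argument. As in the paper, $\rho(K)$ lies in the pro-$p$ congruence kernel of $\GL_n(A)\to\GL_n(k)$, and topological perfectness of $K$ forces $\rho(K)=1$; you unwind the paper's one-line step ``a topologically perfect profinite group has no nontrivial pro-$p$ quotient'' at the finite levels $\GL_n(A/\mathfrak{m}^j)$, and you also check that the descended map is continuous, which the paper leaves implicit.
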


\begin{proof}
	It suffices to prove the reverse implication.
	Put $K=\ker(\EtFundGrp{U}\to\EtFundGrp{X})$.
	By Theorem~\ref{thm:perfect-kernel}, $K$ is topologically perfect.
	If $\bar\rho$ factors through $\EtFundGrp{X}$, then
	\begin{equation*}
		\rho(K)\subset\ker\bigl(\GL_n(A)\to\GL_n(k)\bigr).
	\end{equation*}
	This congruence kernel is pro-$p$.
	Since a topologically perfect profinite group has no nontrivial pro-$p$ quotient, $\rho(K)=1$.
	Therefore $\rho$ factors through $\EtFundGrp{X}$.
	The factorization is unique since $\EtFundGrp{U}\to\EtFundGrp{X}$ is surjective.
\end{proof}

Next, we apply Theorem~\ref{thm:perfect-kernel} to complements of hyperplane arrangements.
Let $n\geq1$.
A \emph{projective hyperplane arrangement over $\mathbb{Z}$} is a finite set $\mathcal{A}=\{H_i\}_{i\in I}$ of distinct relative hyperplanes in $\mathbb{P}^n_{\mathbb{Z}}$; that is, its members are defined by equations
\begin{equation*}
	H_i=V(\ell_i),\qquad
	\ell_i=a_{i0}X_0+\cdots+a_{in}X_n,
\end{equation*}
where $a_{ij}\in\mathbb{Z}$, $\gcd(a_{i0},\ldots,a_{in})=1$ for every $i$, and $\ell_i\neq\pm\ell_j$ for $i\neq j$.
We say that the geometric generic arrangement is \emph{generic in codimension two} if no codimension-two linear subspace of $\mathbb{P}^n_{\overline{\mathbb{Q}}}$ is contained in three of its hyperplanes.

\begin{corollary}\label{cor:hyperplane-complements}
	Let $\mathcal{A}$ be a projective hyperplane arrangement over $\mathbb{Z}$.
	Set
	\begin{equation*}
		D_{\mathcal{A}}=\sum_{H\in\mathcal{A}}H,\qquad
		U_{\mathcal{A}}=\mathbb{P}^n_{\mathbb{Z}}\setminus\Supp(D_{\mathcal{A}}).
	\end{equation*}
	Assume that every member of $\mathcal{A}$ contains a point of residue characteristic $2$ at which $D_{\mathcal{A}}$ has normal crossings.
	Then
	\begin{equation*}
		\EtFundGrp{U_{\mathcal{A}}}^{\mathrm{solv}}=1.
	\end{equation*}
	If, moreover, $n\geq 2$ and the geometric generic arrangement is generic in codimension two, then
	\begin{equation*}
		\EtFundGrp{U_{\mathcal{A}}}=1.
	\end{equation*}
\end{corollary}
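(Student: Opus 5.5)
The plan is to apply Theorem~\ref{thm:perfect-kernel} with $X=\mathbb{P}^n_{\mathbb{Z}}$ and $D=D_{\mathcal{A}}$, and then use $\EtFundGrp{\mathbb{P}^n_{\mathbb{Z}}}=1$ from Example~\ref{ex:section1}\ref{ex:section1_1}. Once condition~$(\dagger)$ is checked, the theorem gives
\begin{equation*}
\EtFundGrp{U_{\mathcal{A}}}^{\mathrm{solv}}\simeq\EtFundGrp{\mathbb{P}^n_{\mathbb{Z}}}^{\mathrm{solv}}=1 .
\end{equation*}

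First, each $H_i=V(\ell_i)$ with $\gcd(a_{i0},\ldots,a_{in})=1$ is a relative hyperplane isomorphic to $\mathbb{P}^{n-1}_{\mathbb{Z}}$. Indeed, the coefficient vector is unimodular, so after a change of coordinates in $\GL_{n+1}(\mathbb{Z})$ we have $\ell_i=X_0$. Hence $H_i$ is a horizontal prime divisor, and it is smooth and surjective over $\Spec(\mathbb{Z})$. Since the $\ell_i$ are pairwise non-associate, $D_{\mathcal{A}}$ is a reduced effective Cartier divisor. Condition~\ref{cond:boundary-regular} holds because $\mathbb{P}^n_{\mathbb{Z}}$ is regular. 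Condition~\ref{cond:boundary-nc-two} is exactly the hypothesis. For condition~\ref{cond:boundary-geometric}, note that every connected finite \'etale covering of $\mathbb{P}^n_{\mathbb{Z}}$ is trivial, since its fundamental group is trivial. So we only need each $H_i\simeq\mathbb{P}^{n-1}_{\mathbb{Z}}$ to have geometrically integral generic fiber, which is clear. Alternatively, one can invoke Remark~\ref{rem:smooth-boundary}. The case $n=1$ is fine: then $H_i\simeq\Spec(\mathbb{Z})$ and $K(H_i)=\mathbb{Q}$.

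For the second assertion, Theorem~\ref{thm:perfect-kernel} shows that $\EtFundGrp{U_{\mathcal{A}}}$ is topologically perfect. The plan is to show that it is abelian, so that it is trivial. By the L\^{e}--Saito theorem, the genericity in codimension two gives that $\EtFundGrp{U_{\mathcal{A},\overline{\mathbb{Q}}}}$ is abelian. Its image $N$ in $\EtFundGrp{U_{\mathcal{A}}}$ is normal, with quotient a quotient of $\AbsGalGrp{\mathbb{Q}}$, via the generic-fiber exact sequence for $U_{\mathcal{A},\mathbb{Q}}$.

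The step I expect to be the main obstacle is avoiding any claim that $\EtFundGrp{U_{\mathcal{A}}}/N$ is small. Instead I would argue with finite quotients. Let $Q$ be a finite quotient of $\EtFundGrp{U_{\mathcal{A}}}$, and let $N_Q$ be the image of $N$ in $Q$. Then $N_Q$ is an abelian normal subgroup. The quotient $Q/N_Q$ corresponds to a connected Galois covering of $U_{\mathcal{A}}$ that is pulled back from a finite extension $L/\mathbb{Q}$, namely the field of constants. That covering is therefore dominated by the normalization of $U_{\mathcal{A}}$ in $U_{\mathcal{A},L}$.

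So $Q/N_Q$ is a quotient of $\EtFundGrp{U_{\mathcal{A}}}$ that is itself a quotient of $\mathrm{Gal}(L/\mathbb{Q})$. It corresponds to a finite \'etale covering of $U_{\mathcal{A}}$ of the form ``constant field extension''. Such a covering is unramified over $\Spec(\mathbb{Z})$ away from where it ramifies along the boundary. More simply, every quotient of $\EtFundGrp{U_{\mathcal{A}}}$ of this kind is a quotient of $\EtFundGrp{U_{\mathcal{A}}}$ through $\EtFundGrp{\Spec\mathbb{Z}}$-type data, which is questionable because of the boundary.

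The cleaner route I would try first avoids this issue. Since $Q$ is perfect, $Q/N_Q$ is perfect. Then $Q$ is an extension of the perfect group $Q/N_Q$ by an abelian group, and perfectness of $Q$ must be used together with the solvable result to conclude. So I would aim to show directly that the Galois quotient $\mathrm{Gal}(L/\mathbb{Q})$ occurring here is solvable. I expect this to come from the fact that the image of $\AbsGalGrp{\mathbb{Q}}$ acts on the abelian group $\EtFundGrp{U_{\mathcal{A},\overline{\mathbb{Q}}}}$ through cyclotomic characters: it is generated by the meridians, on which Galois acts via $\hat{\mathbb{Z}}(1)$. The constant part is then handled by an abelian-by-abelian argument, making $Q$ solvable, hence $Q=1$ by the first assertion.
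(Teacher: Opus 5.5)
Your verification of condition~$(\dagger)$ and the deduction of $\EtFundGrp{U_{\mathcal{A}}}^{\mathrm{solv}}=1$ are correct and agree with the paper. Your shortcut for condition~\ref{cond:boundary-geometric}, using $\EtFundGrp{\mathbb{P}^n_{\mathbb{Z}}}=1$ to reduce to $X'=X$, is fine. The second assertion, however, has a genuine gap. You correctly identify $\EtFundGrp{U_{\mathcal{A}}}/N$ as a quotient of $\AbsGalGrp{\mathbb{Q}}$, but you never show that it is trivial, or even solvable. The cyclotomic argument you propose cannot supply this. Knowing that $\AbsGalGrp{\mathbb{Q}}$ acts on the abelian group $N_Q$ through cyclotomic characters only constrains the image of $Q/N_Q$ in $\operatorname{Aut}(N_Q)$; it says nothing about the kernel of that action. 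For example, take $Q=S$ a nonabelian simple quotient of some $\mathrm{Gal}(L/\mathbb{Q})$, with $N_Q=1$. It is perfect, has no nontrivial solvable quotient, and satisfies every constraint you have derived: perfectness from Theorem~\ref{thm:perfect-kernel}, the first assertion, and L\^{e}--Saito. So some arithmetic input ruling out nontrivial ``constant field extension'' coverings of $U_{\mathcal{A}}$ is indispensable, and your proposal does not contain it.

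The paper obtains this input from the homotopy exact sequence of Katz--Lang. The morphism $U_{\mathcal{A}}\to\Spec(\mathbb{Z})$ is smooth, of finite type and surjective, with geometrically connected generic fiber, and $\EtFundGrp{\Spec(\mathbb{Z})}=1$. Hence $\EtFundGrp{U_{\mathcal{A},\overline{\mathbb{Q}}}}\to\EtFundGrp{U_{\mathcal{A}}}$ is surjective, i.e.\ $N=\EtFundGrp{U_{\mathcal{A}}}$. This group is then abelian with trivial solvable quotient, hence trivial. The route you abandoned does exactly this, and your worry about the boundary is unfounded. Since $D_{\mathcal{A}}$ is horizontal, the generic point of each nonempty fiber $U_{\mathcal{A},\mathbb{F}_p}$ lies in $U_{\mathcal{A}}$, and its local ring is a DVR with uniformizer $p$. \'Etaleness over this DVR of the covering with generic fiber $U_{\mathcal{A},L}$ forces $L/\mathbb{Q}$ to be unramified at $p$. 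Minkowski then gives $L=\mathbb{Q}$. Alternatively, within the paper's framework: each $\mathcal{O}^{\mathrm{sh}}_{\mathbb{P}^n_{\mathbb{Z}},\xi_{H}}$ is a $\mathbb{Q}$-algebra with separably closed residue field, hence contains $\overline{\mathbb{Q}}$, so every boundary inertia group $I_{H}$ lies in $N$. Since $\EtFundGrp{\mathbb{P}^n_{\mathbb{Z}}}=1$, Lemma~\ref{lem:boundary-quotient} says these inertia groups topologically normally generate $\EtFundGrp{U_{\mathcal{A}}}$, so again $N=\EtFundGrp{U_{\mathcal{A}}}$.
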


\begin{proof}
	Since each $H_i$ is isomorphic to $\mathbb{P}^{n-1}_{\mathbb{Z}}$, condition~\ref{cond:boundary-geometric} holds for $(\mathbb{P}^n_{\mathbb{Z}},D_{\mathcal{A}})$ by Remark~\ref{rem:smooth-boundary}.
	The remaining conditions in~$(\dagger)$ follow from the hypotheses.
	The first assertion therefore follows from Theorem~\ref{thm:perfect-kernel} and Example~\ref{ex:section1}\ref{ex:section1_1}.

	For the second assertion, we may assume that $\mathcal{A}\neq\varnothing$.
	The morphism $U_{\mathcal{A}}\to\Spec(\mathbb{Z})$ is smooth, of finite type, and surjective, with geometrically connected generic fiber.
	The homotopy exact sequence \cite[Preliminaries, Lemma~2(2)]{MR659153} and Example~\ref{ex:section1}\ref{ex:section1_1} give a surjection
	\begin{equation*}
		\EtFundGrp{U_{\mathcal{A},\overline{\mathbb{Q}}}}
		\twoheadrightarrow\EtFundGrp{U_{\mathcal{A}}}.
	\end{equation*}
	Since the geometric generic arrangement is generic in codimension two, L\^{e}--Saito's theorem \cite[Theorem~\textup{(3.1.2)}]{MR735874} shows that $\EtFundGrp{U_{\mathcal{A},\overline{\mathbb{Q}}}}$ is abelian.
	Thus $\EtFundGrp{U_{\mathcal{A}}}$ is abelian and is therefore trivial by the first assertion.
\end{proof}

\section*{Acknowledgements}
The author would like to thank Professor Akio Tamagawa for his valuable comments.

\printbibliography

\end{document}